\numberwithin{equation}{section}
\newtheorem{theorem}{Theorem}
\newtheorem{proposition}[theorem]{Proposition}
\newtheorem{remark}[theorem]{Remark}
\newtheorem{definition}[theorem]{Definition}
\newtheorem{corollary}[theorem]{Corollary}
\newcommand{\RR}{\ensuremath{\mathbb{R}}}
\newcommand{\Doo}{\ensuremath{\Delta_{00}}}
\newcommand{\D}{\ensuremath{\Delta_{--}}}
\newcommand{\bd}[1] {\ensuremath{\partial{#1}}}
\newcommand{\Oplus} {\ensuremath{\mathop{\oplus}\limits}}
\newcommand{\Sum} {\ensuremath{\mathop{\sum}\limits}}
\newcommand{\ddnu}[1] {\ensuremath{\frac{\partial
      #1}{\partial \nu}}}
\newcommand{\Int}{\ensuremath{\mathop{\int}\limits}}
\newcommand{\na}{\noalign}
\newcommand{\del}{\nabla}
\newcommand{\vb}{\bar{v}}
\newcommand{\gb}{\bar{g}}
\newcommand{\mb}{\bar{m}}
\newcommand{\coker}{\mathrm{coker\ }}
\renewcommand{\bar}[1] {\ensuremath{\overline{#1}}}
\newcommand{\half}{\ensuremath{\frac{1}{2}}}
\newcommand{\ltd}{\ensuremath{L^{2}(D)}}
\newcommand{\intd}{\ensuremath{\Int_{D}}}
\newcommand{\espace}{\vspace*{2mm}}
\begin{document}

\begin{titlepage}
  \title{Discreteness of Transmission Eigenvalues via
    Upper Triangular Compact Operators\footnote{This
      research was supported in part by NSF grant
      DMS-1007447}}

  \author{John Sylvester\\
    Department of Mathematics\\
    University of Washington\\
    Seattle, Washington 98195\\
    U.S.A.  }
  \maketitle
\end{titlepage}
\begin{abstract}
 
  Transmission eigenvalues are points in the spectrum of
  the interior transmission operator, a coupled 2x2 system
  of elliptic partial differential equations, where
  one unknown function must satisfy two boundary
  conditions and the other must satisfy none.\\

  We show that the interior transmission eigenvalues are
  discrete and depend continuously on the contrast by
  proving that the interior transmission operator has
  \textit{upper triangular compact resolvent}, and that
  the spectrum of these operators share many of the
  properties of operators with compact resolvent. In
  particular, the spectrum is discrete and
  the generalized eigenspaces are finite dimensional.\\

  Our main hypothesis is a coercivity condition on the
  \textit{contrast} that must hold only in a neighborhood
  of the boundary.

\end{abstract}

\section{Introduction}

The time-harmonic scattering of waves by a penetrable
scatterer in a vacuum can be modeled with the Helmholtz
equation. The \textit{total wave} \(u\) satisfies the
perturbed Helmholtz equation
\begin{equation}\label{eq:helmm}
 \left(\Delta+k^{2}(1+m)\right) u = 0 \quad \mbox{in} \; \RR^{n}
\end{equation}
where the contrast, \(m(x)\), denotes the deviation of the
square of the index of refraction from the constant
background; i.e. \(n^{2}(x)= 1 + m(x)\) .  The relative
(far field) scattering operator, \(s^{+}\), records the
correspondence between the asymptotics of solutions to of
the free Helmholtz equation to those of
(\ref{eq:helmm}). If the operator \(s^{+}\) has a
nontrivial kernel (null space) at wavenumber \(k\), then we
say that \(k\) is a transmission eigenvalue
\cite{Colton-Kress2} \cite{MR2262743}.  Certain
inverse scattering methods are known to succeed only at
wavenumbers that are not transmission eigenvalues
\cite{Colton-Kress2}. If a scatterer \(m(x)\) is supported
in a bounded domain \(D\), then, if \(k^{2}\) is a transmission
eigenvalue, \(k^{2}\) must be an \textit{interior
  transmission eigenvalue} as defined below. It is
possible to make an extended definition of the scattering
operator \(s^{+}\) such that the two are equivalent
\cite{MR2262743}. A precise relationship between interior
transmission eigenvalues and the scattering operator has
yet to be discovered.

\begin{definition}
  A wavenumber \(k^{2}\) is called an \textit{interior transmission
    eigenvalue} of \(m\) in the domain \(D\) if there exists a non-trivial pair
  \((V,W)\) solving
\begin{eqnarray}
    \nonumber
    \Delta W (x)+k^{2}(1+m)W = 0  \quad  & \mbox{in} & \quad D
    \\\nonumber
    \Delta V + k^{2}V = 0 \quad & \mbox{in} & \quad D
    \\\nonumber
    W=V, \; \frac{\partial W}{\partial \nu}  = \frac{\partial
      V}{\partial \nu} \quad & \mbox{on} & \quad \partial D
  \end{eqnarray}
\end{definition}

If we set \(u = W-V\), \(v = k^{2}V\), and
\(\lambda=-k^{2}\), then the interior transmission
eigenvalue problem can be rewritten as the coupled 2x2
system of elliptic partial differential equations
below. Its distinguishing feature is that \(u\) must
satisfy two boundary conditions and \(v\) satisfies none.
 \begin{eqnarray}\nonumber
   &\left(\Delta-\lambda(1+m)\right)u + mv = 0&
\\\label{eq:24}
 &\left(\Delta-\lambda\right)v = 0&
\\ 
  &u=0\ ; \quad\ddnu{u}=0\quad  \mathrm{on}  \quad \partial D&
 \end{eqnarray}

 Thus the interior transmission eigenvalues are the spectrum of the
 generalized eigenvalue problem:
  \begin{equation}
    \label{eq:1}
    B - \lambda I_{m} := \begin{pmatrix}
                  \Doo&m\\
                     0&\D
                  \end{pmatrix} - \lambda\begin{pmatrix}
                                           (1+m)&0\\
                                               0&1
                                         \end{pmatrix}
  \end{equation}
  Where we use the notation \(\Doo\) and \(\D\) to denote
  the fact that \(u\) must satisfy two boundary conditions
  and \(v\) needn't satisfy any. In other words, we treat
  \(B\) as an unbounded operator on \(\ltd\oplus\ltd\)
  with domain:
\begin{eqnarray}
  \nonumber
  B:H^{2}_{0}(D)\oplus
\left\{v\in\ltd : \Delta v\in\ltd\right\}
\longrightarrow \ltd\oplus\ltd
\end{eqnarray}

The Hilbert space \(H^{2}_{0}\) is the completion
of \(C^{\infty}_{0}\) in the norm
\begin{eqnarray*}
  ||u||_{2}^{2}= ||u||^{2} + ||\Delta u||^{2}
\end{eqnarray*}
and \(||u||\)means the \(\ltd\) norm.  We will always
assume that \hbox{\(\Re(1+m)>\delta>0\)}, so we may also describe the
spectrum of the generalized eigenvalue problem as the
spectrum of \(I_{m}^{-1}B\). Neither \(B\) nor
\(I_{m}^{-1}B\) is self-adjoint; neither has a compact
resolvent, but we will show that, as long as \(D\) is
bounded, both resolvents are \textit{upper triangular
  compact}, and that this is enough to reach most of the
conclusions we know for operators with compact resolvents,
including the facts that the nonzero spectrum is discrete
and consists of eigenvalues of finite multiplicity. Our
main theorem is:

\begin{theorem}\label{sec:th1}
  Suppose that there are real numbers \(m^{*}\ge m_{*}>0\)
  and  a unit complex number \(e^{i\theta}\) in the open
  right half plane (i.e \(\frac{-\pi}{2}<\theta<\frac{\pi}{2}\))
  such that
  \begin{enumerate}
  \item \(\Re(e^{i\theta}m(x))>m_{*}\) in some
    neighborhood \(N\) of \(\bd{D}\), or that \(m(x)\) is
    real in all of \(D\), and satisfies
    \(m(x)\le - m_{*}\) in some
    neighborhood \(N\) of \(\bd{D}\).
  \item \(|m(x)|< m^{*}\) in all of \(D\).
  \item \(\Re(1+m(x))\ge \delta>0\) in all of \(D\).
  \end{enumerate}

  Then the spectrum of \eqref{eq:1} consists of a
  (possibly empty) discrete set of eigenvalues with finite
  dimensional generalized eigenspaces. Eigenspaces
  corresponding to different eigenvalues are linearly
  independent. The eigenvalues and the generalized
  eigenspaces depend continuously on \(m\) in the
  \(L^{\infty}(D)\) topology.
\end{theorem}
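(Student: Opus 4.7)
The plan is to reduce the theorem to an abstract spectral theorem for operators with \emph{upper triangular compact resolvent} (which I understand the paper to have developed in preceding sections). Concretely, I would establish the theorem in three movements: (i) find a $\lambda_0$ for which $B-\lambda_0 I_m$ is boundedly invertible, (ii) exhibit the upper triangular compact structure of the resolvent, and (iii) invoke the abstract theory to conclude discreteness, finite multiplicity, and continuity in $m$.

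For step (i), the block upper triangular shape of $B-\lambda_0 I_m$ means that invertibility is equivalent to invertibility of the two diagonal blocks. The $(2,2)$ block $\Delta_{--}-\lambda_0$, acting on $\{v\in\ltd:\Delta v\in\ltd\}$, can be inverted by a straightforward $L^2$ estimate as soon as $\lambda_0$ avoids the spectrum of the Laplacian with no boundary condition; taking $\Im \lambda_0$ large and of an appropriate sign handles this. The $(1,1)$ block $\Delta_{00}-\lambda_0(1+m):\htzd\!\to\ltd$ is the substantive one, and this is where hypotheses (1)--(3) of the theorem enter. The strategy I would use is to multiply the equation $(\Delta-\lambda_0(1+m))u=f$ by a carefully chosen test function (essentially $e^{i\theta}\bar u$ or $\overline{\Delta u}$ with a boundary cutoff) and integrate twice by parts, using that $u,\partial_\nu u$ vanish on $\bd{D}$ to kill the boundary terms. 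The coercivity of $\Re(e^{i\theta}m)$ near $\bd{D}$ controls the region where $u$ lacks $H^2_0$-Poincar\'e-type control, while bound (2) handles the interior. Choosing $\lambda_0 = re^{-i\theta}$ with $r$ sufficiently large (resp.\ $\lambda_0$ real and large negative in the real $m\le -m_*$ case), I expect to obtain a two-sided estimate $\|u\|_{H^2_0}\le C\|f\|_{L^2}$, yielding invertibility.

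For step (ii), once $\lambda_0$ is chosen, explicit block inversion gives
\begin{equation*}
  (B-\lambda_0 I_m)^{-1}
  =\begin{pmatrix}
    A_{11}& A_{12}\\
    0 & A_{22}
  \end{pmatrix},
\end{equation*}
where $A_{11}=(\Doo-\lambda_0(1+m))^{-1}$, $A_{22}=(\D-\lambda_0)^{-1}$, and $A_{12}=-A_{11}\,m\,A_{22}$. The operators $A_{11}$ and $A_{12}$ take values in $H^2_0(D)$, which embeds compactly into $\ltd$ by Rellich, hence both are compact on $\ltd$. The block $A_{22}$ is bounded but not compact, but it sits in the lower-right corner, so the resolvent is upper triangular compact by definition. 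The analogous statement for $I_m^{-1}B$ follows since $I_m$ is a bounded multiplication operator with bounded inverse, using hypothesis (3).

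For step (iii), I would simply cite the abstract theorems earlier in the paper: upper triangular compactness of the resolvent at one regular point implies the same at every regular point, that the nonzero spectrum is discrete with finite dimensional generalized eigenspaces, that distinct eigenvalues have linearly independent generalized eigenspaces (via a Riesz projection argument block-by-block), and that these quantities vary continuously under norm-continuous perturbations of the resolvent. Norm continuity in $m\in L^\infty(D)$ follows because multiplication by $m$ is bounded on $\ltd$ and because the invertibility estimate from step (i) is uniform on any $L^\infty$ neighborhood of $m$ satisfying (1)--(3) with slightly weakened constants. The main obstacle is unambiguously step (i): producing the fourth-order-type coercive estimate for $\Doo-\lambda_0(1+m)$ on $H^2_0(D)$ using only the behavior of $m$ near $\bd{D}$ is delicate, since ordinary Lax--Milgram on $H^1_0$ fails and one must exploit the two vanishing boundary conditions to recover coercivity at the fourth-order level.
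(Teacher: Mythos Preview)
There is a genuine gap in step~(i), and a related misreading of the UTC definition in step~(ii).

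The $(2,2)$ block $\D-\lambda_0$ is \emph{never} invertible, for any $\lambda_0\in\C$. The operator $\D$ acts on the maximal domain $\{v\in\ltd:\Delta v\in\ltd\}$, with no boundary conditions at all, and for every $\lambda_0$ the equation $(\Delta-\lambda_0)v=0$ has an infinite-dimensional space of $\ltd$ solutions on the bounded domain $D$ (restrict $e^{i\zeta\cdot x}$ with $\zeta\in\C^{n}$, $\zeta\cdot\zeta=-\lambda_0$). So the spectrum of $\D$ is all of $\C$, and your block-diagonal inversion strategy cannot start. Relatedly, in step~(ii) you have the UTC condition backwards: the \emph{diagonal} blocks, including the $(2,2)$ entry, must be compact; only the strictly lower block $R_{21}$ is exempt. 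A noncompact $A_{22}$ would not yield a UTC resolvent even if it existed.

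The paper's route is essentially different and uses the coupling between the two equations in an unavoidable way. It first shows (Proposition~\ref{sec:prop6}) that $B-\lambda_0 I$ (with the plain identity, not $I_m$) is invertible for large positive $\lambda_0$ and that all three upper-triangular blocks of its resolvent, \emph{including} $R_{22}$, are compact. Both invertibility and the compactness of $R_{22}$ hinge on the coupled identity $\int m|v|^{2}=\int f\bar v-\int\bar g\,u$ together with the boundary coercivity $\Re(e^{i\theta}m)>m_*$ near $\bd D$; this is what controls $v$ in the absence of boundary conditions. For Theorem~\ref{sec:th1} the paper then factors
\[
B-\lambda_0 I_m=(B-\lambda_0 I)\Bigl(I-\lambda_0(B-\lambda_0 I)^{-1}\begin{pmatrix}m&0\\0&0\end{pmatrix}\Bigr),
\]
notes that the second factor is identity minus UTC (hence Fredholm of index zero), and proves its kernel or cokernel is trivial via a separate energy argument (Proposition~\ref{sec:appl-ut-analyt}); hypothesis~(3) and the restriction $|\theta|<\pi/2$ enter only at this last step.
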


We will obtain this as a corollary of 
\begin{theorem}\label{sec:th2}
   Suppose that there are real numbers \(m^{*}\ge m_{*}>0\)
  and  a unit complex number \(e^{i\theta}\)
  such that
  \begin{enumerate}
  \item     \(\Re(e^{i\theta}m(x))>m_{*}\)
    in  some neighborhood of \(\bd{D}\).
  \item \(|m(x)|< m^{*}\) in all of \(D\).
  \end{enumerate}
  Then the spectrum of \(B\) consists of a (possibly
  empty) discrete set of eigenvalues with finite
  dimensional generalized eigenspaces. Eigenspaces
  corresponding to different eigenvalues are linearly
  independent.The eigenvalues and the generalized
  eigenspaces depend continuously on \(m\) in the
  \(L^{\infty}(D)\) topology.
\end{theorem}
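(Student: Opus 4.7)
The plan is to exhibit a point $\lambda_0 \in \C$ in the resolvent set of $B$, identify the upper triangular compact structure of the resolvent $R_0 := (B - \lambda_0 \I)^{-1}$, and then apply an analytic Fredholm argument to obtain the discrete spectrum and its continuous dependence on $m$.

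To locate $\lambda_0$, I would start with the more classical setting in which $m$ is coercive on all of $D$ and then extend to the weaker hypothesis of the theorem. In the strong case, the first row of $(B - \lambda_0\I)(u,v) = (f,g)$ lets one eliminate $v = m^{-1}[f - (\Delta - \lambda_0(1+m))u]$; substituting into the second row yields a single fourth order equation for $u \in \htzd$. Its natural sesquilinear form on $\htzd \times \htzd$ has principal part $\int_D m^{-1}(\Delta u)\overline{(\Delta \phi)}$, and since $\Re(e^{i\theta} m)>m_*$ forces $\Re(e^{-i\theta} m^{-1})>0$, rotating the form by $e^{-i\theta}$ and choosing $\arg\lambda_0$ compatible with $\theta$ and $|\lambda_0|$ large makes it coercive; Lax--Milgram then provides a unique solution. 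Under only the weaker hypothesis that coercivity holds on a neighborhood $N$ of $\bd{D}$, I would localize: the form is coercive when tested against functions supported in $N$, and the interior contribution is absorbed as a compact perturbation via Rellich, so Fredholm theory again yields invertibility for a suitable $\lambda_0$.

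With $R_0$ in hand, its natural range is $\htzd \oplus \{v \in \ltd : \Delta v \in \ltd\}$. The components landing in $\htzd$ are compact when composed with the Rellich embedding into $\ltd$, while the components landing in the $v$-space are bounded but not compact. The resulting structure is the upper triangular compact resolvent of the title: although $R_0$ itself fails to be compact on $\ltd\oplus\ltd$, the operator $\mu I - R_0$ remains Fredholm of index zero for every $\mu\ne 0$, which is sufficient for an analytic Fredholm theorem. Applying it to the family $I - (\lambda-\lambda_0)R_0$ yields a meromorphic resolvent $(B-\lambda\I)^{-1}$ on $\C$; the poles are the spectrum, isolated and of finite order, with finite-dimensional generalized eigenspaces extracted by spectral projection that are linearly independent for distinct eigenvalues. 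Continuous dependence on $m$ in $L^{\infty}(D)$ follows from the continuity of $R_0(m)$ combined with the perturbation theory for zeros of analytic Fredholm families.

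The main obstacle is the weaker hypothesis. The interior of $D$, where $m$ need not be coercive, must be handled by a cutoff-and-compact-perturbation argument, and injectivity of $B - \lambda_0 \I$ at $\lambda_0$ must be argued from an energy estimate near the boundary combined with a kernel-absorption or unique-continuation step propagating control into the interior. The precise verification that the resulting $R_0$ belongs to an ``upper triangular compact'' class for which analytic Fredholm methods still work, and that this class is stable under small $L^{\infty}$ perturbations of $m$, is the structural input that most directly distinguishes this argument from the classical compact resolvent case.
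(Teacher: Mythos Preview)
Your high-level strategy---locate $\lambda_0$ in the resolvent set, verify an upper triangular compact structure for $R_0=(B-\lambda_0 I)^{-1}$, then run analytic Fredholm---matches the paper's. But two of your concrete steps have real gaps. The fourth-order elimination $v=m^{-1}[f-(\Delta-\lambda_0)u]$ requires $m\ne 0$ throughout $D$, which is not assumed; $m$ may vanish on open subsets of the interior, and there the elimination, and hence your sesquilinear form, is simply undefined, so ``absorb the interior as a compact perturbation'' cannot even be set up. The paper never eliminates. It multiplies the conjugate of the $v$-equation by $u$, integrates by parts using \emph{both} boundary conditions on $u$, and obtains the coupling identity $\int_D m|v|^{2}=\int_D f\bar v-\int_D \bar g\, u$. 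Together with an interior elliptic estimate (for large positive $\lambda$, $\|\phi v\|$ is small relative to $\|(1-\phi)v\|$) and the coercivity of $m$ on $N$, this gives $\|v\|^{2}\le K(\|f\|^{2}+\lambda^{-1}\|g\|^{2})$ directly; the $u$-estimates follow, and invertibility at large positive $\lambda_0$ comes from closed range plus the dual estimate for $B^*$.

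Second, you misidentify the compact blocks. You grant compactness only to the components landing in $H^{2}_{0}$ (the first row of $R_0$) and say those landing in the $v$-slot are merely bounded. That first-row-compact structure is \emph{not} enough to make $I-(\lambda-\lambda_0)R_0$ Fredholm: after any row reduction you still need $I-(\lambda-\lambda_0)R_{22}$ to be Fredholm, which has no reason to hold for a merely bounded $R_{22}$. The paper shows that $R_{11},R_{12},R_{22}$ are \emph{all} compact; only the lower-left $R_{21}$ is not (though $\phi R_{21}$ is). The compactness of $R_{22}$ is the heart of the matter and does not come from any embedding, since $v$ satisfies no boundary condition. It is proved by taking $g_n\rightharpoonup 0$ with $f=0$, using the already-established compactness of $R_{12}$ to get $u_n\to 0$ strongly, reading $\int m|v_n|^{2}=\int\bar g_n u_n\to 0$ from the coupling identity, and then invoking the boundary coercivity of $m$ to force $\|(1-\phi)v_n\|\to 0$. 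This is precisely where hypothesis~1 of the theorem enters, and without it the UTC structure---and the whole Fredholm argument---does not get off the ground.
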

\begin{remark}
  If we assume that \(m\) is continuous in the closure of
  \(D\), then we need only state the conditions in item 1
  of each theorem on \(\bd{D}\), and they will necessary
  hold in some neighborhood.
\end{remark}

Our statement about the independence of eigenfunctions
corresponding to different eigenvalues is, of course,
trivially true for any linear operator. We state it
explicitly because it is not obvious in other
formulations of the interior transmission probelm.The
continuity of the eigenvalues and eigenfunctions will
follow from the continuity of the spectral projections in
the proposition below.  We use the subscript in \(B_{m}\)
to denote the dependence on the contrast \(m(x)\).
\begin{proposition}\label{sec:spectralP}
  Let \(A\) represent the operator \(B_{m}\), or
  \(I_{m}^{-1}B_{m}\) and let \(\gamma\) be a bounded
  rectifiable curve in the complex plane that avoids the
  spectrum of \(A\). Then the spectral projection
  \begin{eqnarray}\label{eq:36}
    &P_{\gamma}(m) :=\frac{1}{2\pi i}
    \Int_{\gamma}\left(A-\lambda I\right)^{-1}d\lambda&
\\\na{viewed as an operator valued  function of \(m\in L^{\infty}(D)\)
  with values in the space of operators mapping:\espace}\nonumber
 &P_{\gamma} : \ltd\oplus\ltd \longrightarrow H^{2}_{0}(D)\oplus\ltd&
\end{eqnarray}
is continuous at \(m\).
\end{proposition}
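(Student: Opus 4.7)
The plan is to apply the second resolvent identity and rely on the $L^{\infty}$ smallness of $m'-m$ to control the difference of resolvents uniformly for $\lambda\in\gamma$. Denote $A_{m}$ for either $B_{m}$ or $I_{m}^{-1}B_{m}$; crucially, the common domain $H^{2}_{0}(D)\oplus\{v\in\ltd:\Delta v\in\ltd\}$ does not depend on $m$. The key preliminary observation is that $(A_{m'}-A_{m})(A_{m}-\lambda)^{-1}$ extends to a bounded operator on $\ltd\oplus\ltd$ whose norm is $O(\|m-m'\|_{\infty})$, uniformly for $\lambda$ in any compact set away from $\mathrm{spec}(A_{m})$. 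For $A_{m}=B_{m}$ this is immediate, since $B_{m'}-B_{m}$ is multiplication by the off-diagonal $m'-m$. For $A_{m}=I_{m}^{-1}B_{m}$ one splits
\begin{equation*}
A_{m'}-A_{m}=I_{m'}^{-1}(B_{m'}-B_{m})+(I_{m'}^{-1}-I_{m}^{-1})B_{m},
\end{equation*}
uses $\Re(1+m)\ge\delta>0$ to control $\|I_{m'}^{-1}-I_{m}^{-1}\|_{\infty}$ by $\|m-m'\|_{\infty}$, and tames the unbounded $B_{m}$ factor via the identity $B_{m}(A_{m}-\lambda)^{-1}=I_{m}+\lambda I_{m}(A_{m}-\lambda)^{-1}$.

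Next, fix a contrast $m$ and a bounded rectifiable curve $\gamma$ disjoint from $\mathrm{spec}(A_{m})$. By Theorem \ref{sec:th2}, $\mathrm{spec}(A_{m})$ is discrete, so $\gamma$ has positive distance from it, and $\lambda\mapsto(A_{m}-\lambda)^{-1}$ is norm-continuous on the compact set $\gamma$ as a map from $\ltd\oplus\ltd$ into $H^{2}_{0}(D)\oplus\ltd$; call $C_{0}$ a uniform bound on its operator norm. For $\|m-m'\|_{\infty}$ sufficiently small, the preceding paragraph yields $\|(A_{m'}-A_{m})(A_{m}-\lambda)^{-1}\|_{\ltd\oplus\ltd\to\ltd\oplus\ltd}\le\tfrac12$ uniformly on $\gamma$. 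A Neumann series then gives invertibility of $A_{m'}-\lambda$ for every $\lambda\in\gamma$, with $(A_{m'}-\lambda)^{-1}$ uniformly bounded from $\ltd\oplus\ltd$ into $H^{2}_{0}(D)\oplus\ltd$. Combined with the second resolvent identity
\begin{equation*}
(A_{m'}-\lambda)^{-1}-(A_{m}-\lambda)^{-1}=-(A_{m'}-\lambda)^{-1}(A_{m'}-A_{m})(A_{m}-\lambda)^{-1},
\end{equation*}
this produces $\sup_{\lambda\in\gamma}\|(A_{m'}-\lambda)^{-1}-(A_{m}-\lambda)^{-1}\|_{\ltd\oplus\ltd\to H^{2}_{0}(D)\oplus\ltd}=O(\|m-m'\|_{\infty})$.

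Integrating around $\gamma$ and applying the triangle inequality then gives
\begin{equation*}
\|P_{\gamma}(m')-P_{\gamma}(m)\|_{\ltd\oplus\ltd\to H^{2}_{0}(D)\oplus\ltd}\le\frac{|\gamma|}{2\pi}\,C\,\|m-m'\|_{\infty},
\end{equation*}
which is the required continuity. The main technical obstacle is not the Neumann/resolvent calculus itself, which is standard, but the initial claim that $(A_{m'}-A_{m})(A_{m}-\lambda)^{-1}$ is bounded on $\ltd\oplus\ltd$ with small operator norm in the $I_{m}^{-1}B_{m}$ case: a naive look at $A_{m'}-A_{m}$ exposes an unbounded $B_{m}$, so the argument must be organized so that every occurrence of $B_{m}$ immediately meets a resolvent. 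The careful bookkeeping of which factor carries the $L^{\infty}$ smallness, which is merely bounded on $\ltd\oplus\ltd$, and which must be taken into the stronger codomain $H^{2}_{0}(D)\oplus\ltd$ is the crux of the proof.
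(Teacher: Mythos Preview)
Your proof is correct and follows essentially the same route as the paper: bound the difference of the operators as a relatively bounded perturbation, invert via a Neumann series uniformly on $\gamma$, apply the second resolvent identity, and integrate. The only difference is organizational, in the $I_{m}^{-1}B_{m}$ case. The paper writes out
\[
I_{p}^{-1}B_{p}-I_{m}^{-1}B_{m}=\begin{pmatrix}\dfrac{m-p}{(1+m)(1+p)}\Delta_{00}&\dfrac{p-m}{(1+m)(1+p)}\\[2mm]0&0\end{pmatrix}
\]
and bounds this directly as a map $H^{2}_{0}(D)\oplus\ltd\to\ltd\oplus\ltd$ (the $\Delta_{00}$ entry is harmless because $\|\Delta u\|\le\|u\|_{H^{2}_{0}}$), whereas you absorb the unbounded piece via the algebraic identity $B_{m}(A_{m}-\lambda)^{-1}=I_{m}+\lambda I_{m}(A_{m}-\lambda)^{-1}$. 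These are two equivalent ways of expressing the same relative-boundedness fact; neither buys anything the other does not.
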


There are many different ways to state the continuity of
eigenvalues and eigenspaces, but all can be infered from
proposition \ref{sec:spectralP} by choosing the curve
\(\gamma\) appropriately. Typically, the most useful
choice is a small circle surrounding an isolated
eigenvalue. The continuity of \(P_{\gamma}\) implies, for
example, that the dimension of its range is constant for
small perturbations, so that a simple eigenvalue must
remain simple, and the total algebraic multiplicity of the
eigenvalues inside \(\gamma\) cannot change.

\begin{remark}
  If we modify the background of our \textit{interior
  transmission eigenvalue} to be a variable real valued
index of refraction, \(n(x)\), that is bounded away from
zero, and even replace the Laplacian with a real
divergence form operator, i.e.

\begin{definition}
  A wavenumber \(k^{2}\) is called an \textit{interior transmission
    eigenvalue} of \(m\) in the domain \(D\) with real background \((n,\gamma)\)
  if there exists a non-trivial pair
  \((V,W)\) solving
\begin{eqnarray}
    \nonumber
    \nabla\cdot\gamma\nabla W (x)+k^{2}n^{2}(x)(1+m(x))W = 0  \quad  & \mbox{in} & \quad D
    \\\nonumber
    \nabla\cdot\gamma\nabla V + k^{2}n^{2}(x)V = 0 \quad & \mbox{in} & \quad D
    \\\nonumber
    W=V, \; \frac{\partial W}{\partial \nu}  = \frac{\partial
      V}{\partial \nu} \quad & \mbox{on} & \quad \partial D
  \end{eqnarray}
\end{definition}

As long as \(n(x)\ge \delta>0\) and \(\gamma\) is real,
positive definite, and satisfyies \(\gamma\ge \delta
I\) both theorem \ref{sec:th1} and theorem \ref{sec:th2}
continue to hold.The proofs of all the estimtes are
correct line by line with just the obvious
substitutions. The resolvent in (\ref{eq:1}) becomes


\begin{equation}
    \nonumber
    B - \lambda I_{m} := \frac{1}{n^{2}(x)}\begin{pmatrix}
                  \nabla\gamma\nabla_{00}&n^{2}m\\
                     0&\nabla\gamma\nabla_{--}
                  \end{pmatrix} - \lambda\begin{pmatrix}
                                           (1+m)&0\\
                                               0&1
                                         \end{pmatrix}
  \end{equation}

  which enjoys the same duality (e.g. (\ref{eq:10})). As the
  compactness properties follow directly from the
  estimates, they are also the same.

\end{remark}

The transmission eigenvalue was introduced by Colton and
Monk in 1988 \cite{MR934695}. In 1989, Colton,
Kirsch, and P{\"a}iv{\"a}rinta \cite{MR1019312}
proved that the set of real transmission eigenvalues was
discrete. That this set was non-empty was first proved in
2008 by P{\"a}iv{\"a}rinta and the author
\cite{MR2438784}, under the hypothesis that \(m\) was
large enough. In 2010, Cakoni, Gintides, and Haddar
removed that restriction and showed that the set of real
transmission eigenvalues was infinite \cite{MR2596553}.\\

Our results differ from previous work because we assume
relatively little about the contrast \(m\) in the interior
of \(D\). Most of the previous work that we are aware of
requires that \(m(x)>m_{*}>0\) or \(m(x)<-m_{*}<0\) in the
whole domain. An exception is \cite{MR2596549}, which only
assumes \(m(x)\ge 0\) and allows cavities (i.e. \(m(x)=0\)
on an open subset of \(D\)).Some of these results on real
transmission eigenvalues have been extended to general
elliptic operators in
\cite{MR2745799}.\\

\section{A Priori Estimates}

Because \(v\) in \eqref{eq:24} satisfies no boundary
conditions, we don't have a direct estimate in terms of
\(||g||\). However, we still have the local elliptic
estimates. We prove a simple version of these estimates
below to show that, for large positive \(\lambda\), Most
of \(||v||\) is concentrated near the boundary. The function
\(\rho\) in the proposition will be either \(1\) or
\((1+m)\) in our applications.

\begin{proposition}\label{sec:interior-est}
 Suppose that \(\Re(\rho)>\delta>0\) and \(\phi(x)\in
 C_{0}^{\infty}(D)\) be real valued, with \(0\le\phi\le 1\) 
 (in our applications, we will take \(\phi(x)=1\) outside
 a neighborhood \(N\) of \(\bd{D}\)). If
 \begin{eqnarray*}
   \left(\Delta-\lambda\rho\right)v = g \quad \mathrm{ in\ } D
 \end{eqnarray*}
  then, there is a constant \(K(\phi,\delta)\) such
  that, for sufficiently large positive \(\lambda\),
  
 \begin{eqnarray}
   \label{eq:17}
   ||\phi v||^{2}&\le&
   \frac{K}{\lambda-K}\left(||(1-\phi)v||^{2}
     +||\phi g||^{2}\right) 
\\\label{eq:31}
 ||v||^{2} &\le& K\left(||(1-\phi)v||^{2}
+ \frac{||\phi g||^{2}}{\lambda-K}\right)
    \\\label{eq:32}
  ||\del(\phi v)||^{2}&\le& K\left(||v||^{2}+
 +||\phi g||^{2}\right)
 \end{eqnarray}
\end{proposition}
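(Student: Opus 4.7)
The plan is to test the equation $(\Delta-\lambda\rho)v=g$ against $\overline{\phi^{2}v}$. Because $\phi\in C_{0}^{\infty}(D)$, the function $\phi^{2}\bar{v}$ has compact support in $D$, so integration by parts produces no boundary terms and yields
\begin{eqnarray*}
\Int_D \phi^{2}|\nabla v|^{2}+2\,\Re\Int_D\phi\,\bar{v}\,\nabla\phi\cdot\nabla v+\lambda\,\Re\Int_D\rho\phi^{2}|v|^{2}= -\Re\Int_D\phi^{2}\bar{v}g.
\end{eqnarray*}
Expanding $|\nabla(\phi v)|^{2}=\phi^{2}|\nabla v|^{2}+2\phi\,\Re(\bar{v}\,\nabla\phi\cdot\nabla v)+|v|^{2}|\nabla\phi|^{2}$ and substituting, the first two terms on the left recombine into $\Int_D|\nabla(\phi v)|^{2}-\Int_D|v|^{2}|\nabla\phi|^{2}$, giving the single clean identity
\begin{eqnarray*}
\Int_D|\nabla(\phi v)|^{2}+\lambda\,\Re\Int_D\rho\phi^{2}|v|^{2}= -\Re\Int_D\phi^{2}\bar{v}g+\Int_D|v|^{2}|\nabla\phi|^{2},
\end{eqnarray*}
from which all three estimates will drop out.

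From this identity I would use $\Re(\rho)\ge\delta$ on the left, Cauchy--Schwarz on the $\bar{v}g$ term, and the pointwise bound $|v|^{2}\le 2|\phi v|^{2}+2|(1-\phi)v|^{2}$ on the $|\nabla\phi|$ term to obtain
\begin{eqnarray*}
\|\nabla(\phi v)\|^{2}+\lambda\delta\|\phi v\|^{2}\le \|\phi g\|\,\|\phi v\|+2\|\nabla\phi\|_{\infty}^{2}\bigl(\|\phi v\|^{2}+\|(1-\phi)v\|^{2}\bigr).
\end{eqnarray*}
An AM--GM on $\|\phi g\|\|\phi v\|$ absorbs a small multiple of $\|\phi v\|^{2}$ into the left side, and once $\lambda$ exceeds a threshold $K=K(\phi,\delta)$ the remaining $\|\phi v\|^{2}$ on the right can also be absorbed. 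Dropping the nonnegative $\|\nabla(\phi v)\|^{2}$ term and solving for $\|\phi v\|^{2}$ yields \eqref{eq:17}; instead dropping $\lambda\delta\|\phi v\|^{2}$ and using $\|(1-\phi)v\|\le\|v\|$ yields \eqref{eq:32}. The intermediate estimate \eqref{eq:31} follows from \eqref{eq:17} combined with $\|v\|^{2}\le 2\|\phi v\|^{2}+2\|(1-\phi)v\|^{2}$, noting that $K/(\lambda-K)$ stays bounded for large $\lambda$.

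The one non-routine step is the cancellation that produces the clean identity. The cross term $2\Re\Int\phi\bar{v}\,\nabla\phi\cdot\nabla v$ coming from integration by parts cannot be controlled on its own by $\|(1-\phi)v\|$ and $\|\phi g\|$, and a direct Cauchy--Schwarz on it would cost the full $\Int\phi^{2}|\nabla v|^{2}$ and destroy the absorption argument. Recognising that this is precisely the mixed term appearing in the expansion of $|\nabla(\phi v)|^{2}$, so it should be recombined rather than estimated, is the key observation; everything after the identity is standard Cauchy--Schwarz, AM--GM, and absorption into the left-hand side.
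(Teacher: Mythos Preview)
Your proof is correct and follows the paper's argument almost verbatim: both test the equation against $\phi^{2}\bar v$, integrate by parts, and arrive at the identical identity $\|\nabla(\phi v)\|^{2}+\lambda\,\Re\!\int\rho|\phi v|^{2}=\int|\nabla\phi|^{2}|v|^{2}-\Re\!\int\phi\bar v\,\phi g$, after which the three inequalities follow by the same absorption and splitting argument. The only cosmetic difference is that you reach the identity by directly expanding $|\nabla(\phi v)|^{2}$, whereas the paper carries along the cross term and kills it by noting that $\tfrac12\int\nabla\phi^{2}\cdot(v\nabla\bar v-\bar v\nabla v)$ is purely imaginary before taking real parts.
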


\begin{proof}
 
\begin{eqnarray}\nonumber
  \intd \vb \phi^{2} \left(\Delta-\lambda\rho\right)v 
  &=& \intd\phi^{2}\vb g
\\\nonumber
  -\int \del (\phi^{2}\vb)\cdot\del v
  -\lambda\int\rho|\phi v|^{2} 
  &=& \intd\phi\vb\phi g
\\\nonumber
 -\int \del (\phi\vb)\cdot\phi\del v -
 \phi\vb\del\phi\cdot\del v -\lambda\int\rho|\phi v|^{2} 
  &=& 
\\\nonumber
-\int \del (\phi\vb)\cdot\del(\phi v) + 
\int  \del (\phi\vb)\cdot v\del\phi -
 \phi\vb\del\phi\cdot\del v -\lambda\int\rho|\phi v|^{2}
 &=& 
\\\nonumber
 -\int |\del(\phi v)|^{2} + 
 \int |\del\phi|^{2}\;|v|^{2} +
\half \int \del\phi^{2}\cdot ( v\del\vb\cdot - \vb\del v)
-\lambda\int\rho|\phi v|^{2}
  &=&
\\\na{Taking real parts removes the third term on the left hand side}\nonumber
 -\int |\del(\phi v)|^{2} + 
 \int |\del\phi|^{2}\;|v|^{2} 
-\lambda\int\rho|\phi v|^{2}
  &=&  \Re\left(\intd\phi\vb\phi g\right)
\end{eqnarray}

Rearranging yields
  \begin{eqnarray}
\nonumber
  ||\del(\phi v)||^{2}
+ \lambda\int\rho|\phi v|^{2}
 &=&
\int |v|^{2}|\del\phi|^{2}
-  \Re\left(\intd\phi\vb\phi g\right)
\\ \nonumber
 ||\del(\phi v)||^{2}
+ \lambda\delta||\phi v||^{2}
 &\le&
 K(\phi)\left(||v||^{2}+
  ||\phi g||^{2}\right) 
\\\na{which immediately yields (\ref{eq:32}) and also}
\nonumber
  ||\phi v||^{2} &\le& \frac{K}{\lambda\delta}\left(||v||^{2}+
  ||\phi g||^{2}\right)
\\\nonumber
&\le&\frac{2K}{\lambda\delta}
\left(||\phi v||^{2}+ ||(1-\phi) v||^{2} +
  ||\phi g||^{2}\right)
\end{eqnarray}
which becomes \eqref{eq:17} after subtracting the term
\(||\phi v||^{2}\) from both sides.

Adding \(||(1-\phi)v||^{2}\) to both sides yields
\eqref{eq:31}.
\end{proof}

\begin{corollary}
  Suppose that \(\phi\), \(v\), and \(\rho\)
    satisfy the hypothesis of proposition
    \ref{sec:interior-est}, that, for some unit complex
    number \(e^{i\theta}\) and  some
    neighborhood \(N\) of \(\bd{D}\),
    \(\Re(e^{i\theta}m(x))>m_{*}\), and that \(\phi(x)=1\) in
    \(D\setminus N\). Then, for sufficiently large
    positive \(\lambda\),
    \begin{eqnarray}
      \label{eq:18}
      ||v||^{2} &\le& K \left( \left|\int m |v|^{2}\right|
        + \frac{||\phi g||^{2}}{\lambda-K}\right)
\\\na{and}\label{eq:19}
      \left|\int m\phi^{2} |v|^{2}\right| &\le&
      \frac{K}{\lambda-K}
      \left(\left|\int(1-\phi^{2})m|v|^{2}\right|
        +||\phi g||^{2}\right) 
    \end{eqnarray}
\end{corollary}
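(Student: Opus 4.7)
The plan is to leverage the coercivity hypothesis $\Re(e^{i\theta}m) > m_*$ on the neighborhood $N$ to convert the $||(1-\phi)v||$ terms appearing in Proposition~\ref{sec:interior-est} into weighted integrals involving $m|v|^2$. The key pointwise observation is this: since $\phi = 1$ on $D \setminus N$, the cutoff $1-\phi^2$ is supported in $N$; since $0 \le \phi \le 1$, we have $(1-\phi)^2 \le (1-\phi)(1+\phi) = 1-\phi^2$; and by coercivity, $m_*(1-\phi^2) \le (1-\phi^2)\Re(e^{i\theta}m)$ pointwise. Integrating against $|v|^2$ and using $\Re(e^{i\theta}z) \le |z|$ produces the bridge estimate
\[ ||(1-\phi)v||^2 \;\le\; \int(1-\phi^2)|v|^2 \;\le\; \frac{1}{m_*}\left|\int(1-\phi^2)\,m|v|^2\right|. \]

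For \eqref{eq:19}, I would combine this bridge with the trivial bound $|\int \phi^2 m|v|^2| \le m^*||\phi v||^2$ (using $|m| \le m^*$) and \eqref{eq:17}: the chain $|\int \phi^2 m|v|^2| \le m^*||\phi v||^2 \le \frac{m^*K}{\lambda-K}(||(1-\phi)v||^2 + ||\phi g||^2)$ yields \eqref{eq:19} immediately after consolidating the constants.

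For \eqref{eq:18}, I would start from \eqref{eq:31}, use the bridge to replace $||(1-\phi)v||^2$ by $\frac{1}{m_*}|\int(1-\phi^2)m|v|^2|$, and then split
\[ \left|\int(1-\phi^2)m|v|^2\right| \;\le\; \left|\int m|v|^2\right| + \left|\int \phi^2 m|v|^2\right|. \]
The second term is controlled by \eqref{eq:19}, which produces an implicit inequality in which $|\int(1-\phi^2)m|v|^2|$ appears on the right-hand side with coefficient of order $1/\lambda$. For $\lambda$ sufficiently large, that term can be absorbed into the left-hand side, and rearranging gives \eqref{eq:18}. The main obstacle — more bookkeeping than genuine difficulty — is this absorption step: the constants coming out of \eqref{eq:17}, \eqref{eq:19}, \eqref{eq:31} and the pointwise bridge must be tracked carefully enough that the ``sufficiently large positive $\lambda$'' threshold subsumes all competing factors and lets us solve the implicit inequality cleanly.
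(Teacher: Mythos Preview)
Your proposal is correct and follows essentially the same route as the paper: the same pointwise bridge $(1-\phi)^2\le 1-\phi^2$ together with $\Re(e^{i\theta}m)>m_*$ on $\operatorname{supp}(1-\phi)$, the bound $|\int\phi^2 m|v|^2|\le m^*\|\phi v\|^2$, an application of \eqref{eq:17}, and an absorption argument for large $\lambda$. The only cosmetic difference is the order of operations---the paper proves \eqref{eq:18} first by absorbing $\|v\|^2$ directly, whereas you establish \eqref{eq:19} first and absorb $|\int(1-\phi^2)m|v|^2|$---but the ingredients and logic are the same.
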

\begin{proof}
  \begin{eqnarray}
  \nonumber
\int|v|^{2} &\le& \left|\int(1-\phi^{2})|v|^{2}\right|
+  \left|\int\phi^{2}|v|^{2}\right|
\\\nonumber
 &\le&
\left|\int\Re\left(\frac{e^{i\theta}m}{m_{*}}\right)(1-\phi^{2})|v|^{2}\right|
+  \left|\int\phi^{2}|v|^{2}\right|
\\\nonumber
 &\le&
\left|\Re\left(\frac{e^{i\theta}}{m_{*}}\int m(1-\phi^{2})|v|^{2}\right)\right|
+  \left|\int\phi^{2}|v|^{2}\right|
\\\nonumber
&\le&
\left|\frac{1}{m_{*}}\int m(1-\phi^{2})|v|^{2}\right|
+  \left|\int\phi^{2}|v|^{2}\right|
\\\nonumber
&\le&
\left|\frac{1}{m_{*}}\int m|v|^{2}\right| +
\left|\frac{1}{m_{*}}\int\phi^{2} m|v|^{2}\right| 
+  \left|\int\phi^{2}|v|^{2}\right|
\\\nonumber
&\le&
\left|\frac{1}{m_{*}}\int m|v|^{2}\right| +
\frac{m_{*}+m^{*}}{m_{*}}||\phi v||^{2}
\\\na{Applying (\ref{eq:17}),}\nonumber
&\le&
\left|\frac{1}{m_{*}}\int m|v|^{2}\right| +
\left(\frac{m_{*}+m^{*}}{m_{*}}\right)
\frac{K}{\lambda-K}\left(||(1-\phi) v||^{2}
     +||\phi g||^{2}\right)
\\\nonumber
&\le&\left|\frac{1}{m_{*}}\int m|v|^{2}\right| +
\left(\frac{m_{*}+m^{*}}{m_{*}}\right)
\frac{K}{\lambda-K}\left(||v||^{2}
     +||\phi g||^{2}\right)
\\\na{\espace so, with a different constant and \(\lambda\) large
  enough, we obtain \eqref{eq:18}. \espace}\nonumber
||v||^{2}&\le&\widetilde{K}\left(\left|\int
 m|v|^{2}\right| + \frac{||\phi g||^{2}}{\lambda-K}\right)
\end{eqnarray}
We make a similar calculation to establish \eqref{eq:19}
\begin{eqnarray*}
   \left|\int\phi^{2} m|v|^{2}\right| &\le&
   m^{*}\int\phi^{2}|v|^{2}
\\\na{Applying (\ref{eq:17}) gives\espace}
&\le&\frac{K}{\lambda-K}\left(\int(1-\phi)^{2}|v|^{2}
     +||\phi g||^{2}\right)
\\\na{Because \(0\le\phi\le 1\)\espace}
&\le&\frac{K}{\lambda-K}\left(\int(1-\phi^{2})|v|^{2}
     +||\phi g||^{2}\right)
\\
&\le&\frac{K}{\lambda-K}\left(
\left|\int\Re\left(\frac{e^{i\theta}m}{m_{*}}\right)(1-\phi^{2})|v|^{2}\right|
     +||\phi g||^{2}\right)
\\
&\le&\frac{K}{\lambda-K}\left(\frac{1}{m_{*}}\left|\int
      m(1-\phi^{2})|v|^{2}\right|
     +||\phi g||^{2}\right)
\end{eqnarray*}
\end{proof}

Next, we derive some a priori estimates for the resolvent
of \(B\). Suppose that \(\lambda\) is large enough, that
\begin{eqnarray}
    \label{eq:2}
    (\Delta-\lambda) u + mv &=& f
\\\label{eq:4}
    (\Delta-\lambda) v      &=& g
  \end{eqnarray}
and that \(u\) and \(\ddnu{u}\) vanish on
  \(\bd{D}\). Multiplying the complex conjugate of
  \eqref{eq:4} by \(u\) yields
\begin{eqnarray}\nonumber
  \int u(\Delta-\lambda)\vb &=& \int\gb u
  \\\na{and integrating by parts}\label{eq:5}
  \int \vb(\Delta-\lambda)= &=& \int\gb u
  \\\na{Multiplying \eqref{eq:2} by \(\bar{v}\) and
    inserting \eqref{eq:5} yields}\label{eq:12}
  \int m|v|^{2} &=& \int f\vb - \int \gb u
  \\\na{from which we conclude (using \eqref{eq:18}) that}
    \label{eq:7}
  ||v||^{2}&\le& K\left(||f||^{2} +
    \frac{||g||^{2}}{\lambda-K}  +
||g||\;||u||\right)
\end{eqnarray}
\newcommand{\ub}{\bar{u}}
Next we multiply \eqref{eq:2} by \(\ub\)

\begin{eqnarray}
  \nonumber
  \int\ub(\Delta-\lambda) u + \int mv\ub &=& \int f \ub
\\\na{and integrate by parts}\nonumber
 -\int|\del u|^{2} -\lambda\int|u|^{2}  &=&\int f \ub -
 \int mv\ub 
\\\label{eq:22}
 \int|\del u|^{2} + (\lambda-K)\int|u|^{2}
&\le& K\left(||f||^{2} + ||v||^{2}\right)
\end{eqnarray}
so that
\begin{eqnarray}\label{eq:23}
  ||u||^{2}&\le&\frac{K}{\lambda-K}\left(||f||^{2}
    + ||v||^{2}\right)
\\\na{and}\label{eq:34}
||\del u||^{2}&\le&K\left(||f||^{2}
    + ||v||^{2}\right)
\\\na{Using \eqref{eq:23} in \eqref{eq:7}, gives}
\label{eq:8}
||v||^{2}&\le& K\left(||f||^{2} +
  \frac{||g||^{2}}{\lambda}\right)
\\\na{so that  (\ref{eq:23}) and (\ref{eq:34}) become}
  \label{eq:25}
  ||u||^{2}&\le&\frac{K}{\lambda-K}\left(||f||^{2}
    +\frac{||g||^{2}}{\lambda}\right)
\\\na{and}\label{eq:6}
||\del u||^{2}&\le&K\left(||f||^{2}
    +\frac{||g||^{2}}{\lambda}\right)
\\\na{It now follows from \eqref{eq:2} that}
\label{eq:9}
||\Delta u||&\le& K\left(||f|| +
  \frac{||g||}{\lambda}\right)
\\\na{ and from \eqref{eq:4} that}
\label{eq:20}
||\Delta v||&\le& K\left(\lambda||f|| + ||g||\right)
\\\na{It also folows from  \eqref{eq:18} that\espace}
\label{eq:21}
\int|\del(\phi v)|^{2}&\le& K\left(||f||^{2}+
  ||g||^{2}\right)
\end{eqnarray}

The proposition below is a direct consequence of this list
of inequalities.
\begin{proposition}\label{sec:prop6} For \(\lambda\) real, positive, and
large enough,
  \begin{enumerate}
  \item 
\(\displaystyle
  \left(B-\lambda I\right):H^{2}_{0}(D)\oplus
\left\{v\in\ltd : \Delta v\in\ltd\right\}
\longrightarrow \ltd\oplus\ltd
\)\\
 is invertible.

\item If we write the the resolvent in block diagonal form,
    \begin{equation}
      \label{eq:11}
      \left(B-\lambda I\right)^{-1} =  \begin{pmatrix}
                  R_{11}&R_{12}\\
                     R_{21}&R_{22}
                  \end{pmatrix} 
    \end{equation}
    then \(R_{11}\), \(R_{12}\), and  \(R_{22}\) are compact.
    If
    \(\phi(x)\) is a smooth function vanishing in a
    neighborhood of \(\bd{D}\) and equal to 1 on
    \(D\setminus N\), them \(\phi R_{21}\) is compact.
  \item 
    \(\displaystyle
      ||R_{11}||+ ||R_{12}||+ ||R_{22}||+||\phi R_{21}||
              \le\frac{K}{\lambda}
    \)
  \end{enumerate}
\end{proposition}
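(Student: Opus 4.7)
The plan is to assemble Proposition~\ref{sec:prop6} from the a priori estimates \eqref{eq:8}, \eqref{eq:25}, \eqref{eq:6}, \eqref{eq:9}, \eqref{eq:20}, \eqref{eq:21}, the coercivity corollary of Proposition~\ref{sec:interior-est}, and Rellich's embedding theorem. Injectivity of \(B-\lambda I\) for large positive \(\lambda\) is immediate: setting \((f,g)=(0,0)\) in \eqref{eq:8} and \eqref{eq:25} forces \((u,v)=(0,0)\). The bounds in part~3 are read off the same estimates block by block, specializing \(f=0\) or \(g=0\) and taking square roots.

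For surjectivity I would argue by direct construction. Given \((f,g)\in\ltd\oplus\ltd\), let \(v_{D}\in H^{2}_{0}(D)\) solve \((\Delta-\lambda)v_{D}=g\) with zero Dirichlet data (well defined for \(\lambda\) large and positive), and write the sought \(v\) as \(v=v_{D}+h\) with \(h\) in the null space \(\mathcal{N}:=\{h\in\ltd:(\Delta-\lambda)h=0\}\); the second row of the system is then automatic. The first row \((\Delta-\lambda)u=f-mv\) with \(u\in H^{2}_{0}(D)\) is solvable if and only if \(f-mv\in\mathcal{N}^{\perp}\), because integration by parts identifies the range of \((\Delta-\lambda):H^{2}_{0}(D)\to\ltd\) with \(\mathcal{N}^{\perp}\) (the range being closed thanks to the uniqueness estimate). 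This produces the equation
\[
\int_{D}\bar{\psi}\,m\,h \;=\; \int_{D}\bar{\psi}(f-mv_{D})\qquad\text{for all }\psi\in\mathcal{N}.
\]
The sesquilinear form \(a(\psi,h)=\int\bar{\psi}\,m\,h\) on \(\mathcal{N}\times\mathcal{N}\) is bounded by \(m^{*}\), and the corollary to Proposition~\ref{sec:interior-est}, applied to \(\psi\in\mathcal{N}\) (so its own forcing is zero), gives \(\|\psi\|^{2}\le K\left|\int m|\psi|^{2}\right|\); combined with \(\Re(e^{i\theta}m)>m_{*}\) on \(N\) this yields \(\Re\bigl(e^{i\theta}a(h,h)\bigr)\ge c\|h\|^{2}\), so Lax--Milgram produces a unique \(h\in\mathcal{N}\), and the corresponding \(u\in H^{2}_{0}(D)\) then exists.

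For part~2 Rellich does most of the work. The estimates \eqref{eq:25}, \eqref{eq:6}, \eqref{eq:9} control \(u\) in \(H^{2}_{0}(D)\) uniformly in \(\|f\|+\|g\|\), so \((f,g)\mapsto u\) is compact from \(\ltd\oplus\ltd\) into \(\ltd\), giving \(R_{11}\) and \(R_{12}\) compact; \eqref{eq:21} controls \(\phi v\) in \(H^{1}(D)\), handling \(\phi R_{21}\) the same way. The compactness of \(R_{22}\) is the main obstacle, because the natural domain \(\{v\in\ltd:\Delta v\in\ltd\}\) does not embed compactly into \(\ltd\). Here I would recycle the decomposition \(v=v_{D}+h\) from the surjectivity argument (with \(f=0\)): the map \(g\mapsto v_{D}\) is compact because it factors through \(H^{2}_{0}(D)\), and \(v_{D}\mapsto h\) is continuous \(\ltd\to\ltd\) by Lax--Milgram, so \(R_{22}:g\mapsto v_{D}+h\) is compact. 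It is precisely here, and not in the raw a priori estimates, that the coercivity of \(a\) on \(\mathcal{N}\) is indispensable.
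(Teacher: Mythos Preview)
Your argument is correct, but it diverges from the paper in two places, both worth noting.

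\emph{Surjectivity.} The paper does not build a solution. It observes \eqref{eq:10}: the adjoint \((B-\lambda I)^{*}\) is obtained from \(B-\lambda I\) by swapping the roles of \(u\) and \(v\) and conjugating \(m\). Hence the same a priori estimates \eqref{eq:8}--\eqref{eq:25} give injectivity for the adjoint, which is surjectivity for \(B-\lambda I\). This is a one--line duality argument that recycles work already done. Your route --- Dirichlet part \(v_{D}\) plus an \(h\in\mathcal{N}\) determined by Lax--Milgram on the sesquilinear form \(a(\psi,h)=\int\bar{\psi}\,m\,h\) --- is longer but more constructive, and it earns its keep later. One small wrinkle: the bare estimate \eqref{eq:18} only controls \(\|h\|^{2}\) by \(\bigl|\int m|h|^{2}\bigr|\), not by a fixed real part. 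To get the Lax--Milgram coercivity \(\Re\bigl(e^{i\theta}a(h,h)\bigr)\ge c\|h\|^{2}\) you actually need \eqref{eq:19} (or \eqref{eq:17}) to absorb the interior piece \(\int\phi^{2}m|h|^{2}\) into the boundary piece, where \(\Re(e^{i\theta}m)>m_{*}\) does the work. That step is available, but your writeup skips it. Also, your \(v_{D}\) is in \(H^{1}_{0}\cap H^{2}\), not \(H^{2}_{0}\); this is a notational slip only.

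\emph{Compactness of \(R_{22}\).} The paper argues sequentially and exploits the coupling between the two rows: if \(g_{n}\rightharpoonup 0\), then \(u_{n}=R_{12}g_{n}\to 0\) strongly (already known compact), and the identity \eqref{eq:12} with \(f=0\) gives \(\int m|v_{n}|^{2}=\int\bar{g_{n}}u_{n}\to 0\); the coercivity of \(m\) near \(\partial D\) together with the already established compactness of \(\phi R_{22}\) then forces \(\|(1-\phi)v_{n}\|\to 0\). Your argument is structurally different: you never touch the first row, instead factoring \(R_{22}\) as the compact map \(g\mapsto v_{D}\) followed by the bounded map \(v_{D}\mapsto v_{D}+h\) furnished by Lax--Milgram. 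Both are valid. The paper's version is shorter because it reuses \eqref{eq:12} and the compactness of \(R_{12}\); yours is self--contained for the second equation and makes explicit that the coercivity of \(a\) on \(\mathcal{N}\) is the whole story for \(R_{22}\).
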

\begin{proof}

The combination of \eqref{eq:8} through \eqref{eq:20}
show that
\(B-\lambda I\) is one to one and has closed range. If we note that
the adjoint of \(B\) is 
\begin{eqnarray}
  \label{eq:10}
  \left(B-\lambda I\right)^{*} = \begin{pmatrix}
                  \D-\lambda I&\bar{m}\\
                     0&\Doo-\lambda I
                  \end{pmatrix} =
                  \begin{pmatrix}
                    0&1\\
                    1&0
                  \end{pmatrix} \bar{\left(B-\lambda I\right)}\begin{pmatrix}
                    0&1\\
                    1&0
                  \end{pmatrix}
\end{eqnarray}
we see that \(B^{*}\) is just \(B\) with \(m\) replaced
by \(\bar{m}\) and \(u\) and \(v\) interchanged. Therefore
\eqref{eq:8} and \eqref{eq:25} ensure uniqueness for
\(B^{*}\) and consequently existence for \(B\). Now
\(R_{11}f\) in \eqref{eq:11} denotes the function \(u\)
that satisfies \eqref{eq:2} and \eqref{eq:4} with
\(g=0\), and \(R_{12}g\) denotes the function \(u\) that
satisfies \eqref{eq:2} and \eqref{eq:4} with \(f=0\). The
estimates \eqref{eq:25} and \eqref{eq:6} imply that both are
compact.\\

Similarly, \(R_{22}g\) denotes the function \(v\) that
satisfies \eqref{eq:2} and \eqref{eq:4} with \(f=0\) and
\(R_{21}f\)  denotes the function \(v\) that satisfies
\eqref{eq:2} and \eqref{eq:4} with \(g=0\) The combination
of \eqref{eq:8} and \eqref{eq:21} imply the compactness
of \(\phi R_{21}\) and \(\phi R_{22}\).  To see the
compactness of \(R_{22}\),
suppose that we have a sequence \(\{g_{n}\}\) converging
weakly to zero. Now \(u_{n}=R_{12}g_{n}\) and \(\phi v_{n}
=\phi R_{22}g_{n}\)  converge strongly to zero.
According to \eqref{eq:12},
\begin{eqnarray}
  \label{eq:13}
  \int m|v_{n}|^{2} = \int \bar{g_{n}}u_{n}
\end{eqnarray}
and the right side converges to zero.  Hence
\begin{eqnarray*}
  \int m (1-\phi^{2})|v_{n}|^{2} = \int \bar{g_{n}}u_{n} -
  \int \phi^{2} m |v_{n}|^{2} \longrightarrow 0
\end{eqnarray*}
and therefore  the coercivity condition on \(m\) implies that 
 \(||(1-\phi)v_{n}||\) converge to zero
 so that \(R_{22}\) is compact.\\


 The estimates in the last item follow from
 \eqref{eq:25},\eqref{eq:8}, and the combination of
 \eqref{eq:8} and \eqref{eq:17}.
\end{proof}

 We have constructed the resolvent \((B-\lambda I)^{-1}\)
 for large positive \(\lambda\) and have shown that it is
 \textit{upper triangular compact}.
 
\section{Upper Triangular Compact Operators}
\label{sec:utc}

For an unbounded operator with a compact resolvent,
discreteness of spectra follows from the resolvent
identity

\begin{eqnarray}
  R(\mu) - R(\lambda) = (\mu - \lambda)R(\mu)R(\lambda)
\\\noalign{rewritten as}\label{eq:27}
  R(\mu) = R(\lambda)\left(I - (\mu - \lambda)R(\lambda)\right)^{-1}
\end{eqnarray}

and the analytic Fredholm theorem \cite{Colton-Kress2}. 

 \begin{proposition}[The Analytic Fredholm Theorem]\label{th:aft}
   Suppose that \(R(\lambda)\) is an analytic compact
   operator valued function of \(\lambda\) for \(\lambda\)
   in some open connected set \(L\). Then if
   \(I-R(\lambda_{0})\) is invertible for one \(\lambda_{0}\in
   L\), it is invertible for all but a discrete set of
   \(\lambda\in L\).
 \end{proposition}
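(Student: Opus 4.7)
The plan is to show that near every point of $L$ the non-invertibility of $I-R(\lambda)$ is governed by the vanishing of a single scalar holomorphic function, and then to conclude by a connectedness argument. Fix an arbitrary $\lambda_1\in L$ and choose a disk $D(\lambda_1,r_0)\subset L$. Cauchy integrals give an expansion $R(\lambda)=\sum_{n=0}^{\infty}A_n(\lambda-\lambda_1)^n$ with each $A_n$ compact and $\|A_n\|\le M/r_0^n$, so on a sufficiently small concentric disk $D(\lambda_1,r)$ the tail $\sum_{n\ge N}A_n(\lambda-\lambda_1)^n$ can be made arbitrarily small in operator norm.

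Next I would approximate each of the finitely many compact coefficients $A_0,\ldots,A_{N-1}$ in norm by finite rank operators $F_0,\ldots,F_{N-1}$ and form the polynomial $F(\lambda):=\sum_{n<N}F_n(\lambda-\lambda_1)^n$, which is finite-rank valued and analytic in $\lambda$. Choosing $N$ and the rank approximations, I can arrange $\|R(\lambda)-F(\lambda)\|<\tfrac{1}{2}$ throughout $D(\lambda_1,r)$, so $I-(R(\lambda)-F(\lambda))$ is invertible there by Neumann series with an analytic inverse. The factorization
\begin{equation*}
I-R(\lambda)\;=\;\bigl(I-(R(\lambda)-F(\lambda))\bigr)\bigl(I-G(\lambda)\bigr),\qquad G(\lambda):=\bigl(I-(R(\lambda)-F(\lambda))\bigr)^{-1}F(\lambda),
\end{equation*}
then reduces invertibility of $I-R(\lambda)$ to invertibility of $I-G(\lambda)$, where $G(\lambda)$ is analytic and its range lies in a fixed finite-dimensional subspace $V$.

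Because $\ker(I-G(\lambda))\subset V$, invertibility of $I-G(\lambda)$ is equivalent to $\det(I-M(\lambda))\ne 0$, where $M(\lambda)$ is the matrix of a suitable restriction of $G(\lambda)$ in any basis of $V$. This determinant is scalar and holomorphic on $D(\lambda_1,r)$, so its zero set is either the whole disk or a discrete subset. Put $S:=\{\lambda\in L:I-R(\lambda)\text{ is not invertible}\}$ and let $U_1$ be the set of $\lambda\in L$ possessing a neighborhood in which $S$ is discrete, and $U_2$ the interior of $S$. Both are open, disjoint, and the local analysis just described gives $U_1\cup U_2=L$. Connectedness forces one of them to be empty; since $\lambda_0\notin S$ we have $\lambda_0\in U_1$, whence $U_2=\emptyset$ and $S$ is discrete in $L$.

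The main technical hurdle is producing the analytic finite-rank approximation $F(\lambda)$: this step has to combine quantitative control of the Taylor tail (via Cauchy bounds on the compact coefficients) with norm approximation of finitely many compact operators by finite-rank ones, in such a way that the approximating polynomial is still analytic in $\lambda$. Once $F$ is in hand, holomorphy of $\det(I-M(\lambda))$ and the two-open-sets argument on the connected set $L$ are essentially bookkeeping.
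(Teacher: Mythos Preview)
The paper does not actually prove this proposition; it is quoted as a standard result with a citation to Colton--Kress, and is then used as a black box in the proof of the Upper Triangular Analytic Fredholm Theorem. So there is no ``paper's proof'' to match, and your outline is essentially the classical argument found in that reference.

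There is, however, one genuine slip in your write-up. With your choice
\[
G(\lambda)=\bigl(I-(R(\lambda)-F(\lambda))\bigr)^{-1}F(\lambda),
\]
the range of $G(\lambda)$ is $(I-(R(\lambda)-F(\lambda)))^{-1}\bigl(\operatorname{ran}F(\lambda)\bigr)$, which moves with $\lambda$; it is \emph{not} contained in a fixed finite-dimensional subspace $V$. Consequently the statement ``$\ker(I-G(\lambda))\subset V$'' and the passage to a holomorphic determinant on a fixed $V$ are not justified as written. The repair is immediate: use the factorization in the other order,
\[
I-R(\lambda)=\bigl(I-G'(\lambda)\bigr)\bigl(I-(R(\lambda)-F(\lambda))\bigr),\qquad G'(\lambda)=F(\lambda)\bigl(I-(R(\lambda)-F(\lambda))\bigr)^{-1},
\]
so that $\operatorname{ran}G'(\lambda)\subset W:=\operatorname{ran}F_0+\cdots+\operatorname{ran}F_{N-1}$, a fixed finite-dimensional space. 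Then $\ker(I-G'(\lambda))\subset W$, the matrix of $G'(\lambda)|_W$ in a fixed basis of $W$ is holomorphic, and the determinant argument and the two-open-sets connectedness step go through exactly as you describe. (Equivalently, one can keep your $G$ and invoke the ``$I-AB$ invertible iff $I-BA$ invertible'' identity to pass to $G'$.)
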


 \begin{remark}
   Because \(I-R(\lambda)\) must have index \(0\), it is
   enough to check that \(\ker(I-R(\lambda_{0}))\) or
   \(\coker(I-R(\lambda_{0}))\) is empty. We don't need to
   check both.
 \end{remark}

\begin{definition}
  Suppose that \(R\) is a bounded operator mapping a
  Hilbert space \(H\) to itself.  If the Hilbert space has
  a decomposition into a direct sum
  \(H=\Oplus_{j=1}^{n}H_{j}\), we say that \(R\) is
  \textbf{upper triangular compact} (UTC) (with respect to
  this decomposition) if the upper
  triangular blocks (including the diagonal) in the
  corresponding decomposition of \(R =
  \Sum_{jk=1}^{n}R_{jk}\) are compact.
 \end{definition}

 The proposition below asserts that the
 conclusions of the analytic Fredholm theorem continue to
 hold for operators that are upper triangular Fredholm.

\begin{proposition}[The Upper Triangular Analytic Fredholm Theorem]
  Suppose that \(R(\lambda)\) is an analytic UTC operator
  valued function of \(\lambda\) for \(\lambda\) in an
  open connected set \(L\).  Then if \(I-R(\lambda_{0})\)
  has empty kernel or cokernel for one \(\lambda_{0}\), it
  is invertible for that
  \(\lambda_{0}\) and for all but a discrete set of \(\lambda\in L\).\\
 \end{proposition}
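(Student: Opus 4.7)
The plan is to reduce the UTC statement to the classical analytic Fredholm theorem (Proposition \ref{th:aft}) by a triangular factorization that eliminates the non-compact lower-triangular part. Write $R(\lambda) = U(\lambda) + L(\lambda)$, where $U(\lambda)$ is the sum of the upper-triangular blocks of $R(\lambda)$ (including the diagonal), which by hypothesis is compact, and $L(\lambda)$ is the strictly lower-triangular part, which is only assumed bounded. The finite block structure $H = \Oplus_{j=1}^{n} H_{j}$ makes $L(\lambda)$ block-nilpotent of index at most $n$; consequently $I - L(\lambda)$ is invertible for every $\lambda \in L$, with inverse $\sum_{j=0}^{n-1} L(\lambda)^{j}$, which depends polynomially, hence analytically, on $\lambda$.

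The key identity is
\begin{equation*}
(I - L(\lambda))^{-1}(I - R(\lambda)) = I - K(\lambda), \qquad K(\lambda) := (I - L(\lambda))^{-1} U(\lambda).
\end{equation*}
Since $U(\lambda)$ is compact and $(I - L(\lambda))^{-1}$ is bounded, $K(\lambda)$ is a compact operator, and it is analytic in $\lambda$ as a product of analytic operator-valued functions. Equivalently,
\begin{equation*}
I - R(\lambda) = (I - L(\lambda))\bigl(I - K(\lambda)\bigr),
\end{equation*}
and because $I - L(\lambda)$ is invertible for all $\lambda \in L$, the operators $I - R(\lambda)$ and $I - K(\lambda)$ have the same kernel and the same cokernel, and one is invertible precisely when the other is.

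Now I apply the classical theory to $I - K(\lambda)$. Since $K(\lambda)$ is compact, $I - K(\lambda)$ is Fredholm of index zero, so triviality of either its kernel or its cokernel implies invertibility. Thus the hypothesis that $I - R(\lambda_{0})$ has empty kernel or empty cokernel gives invertibility of $I - K(\lambda_{0})$, hence of $I - R(\lambda_{0})$. Proposition \ref{th:aft} applied to the analytic compact family $K(\lambda)$ on the connected open set $L$ then yields invertibility of $I - K(\lambda)$, and therefore of $I - R(\lambda)$, off a discrete subset of $L$.

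There is no real obstacle beyond recognizing the decomposition: the only point to verify carefully is the block-nilpotency of $L(\lambda)$, which is forced by the fixed triangular block structure of the decomposition $H = \Oplus_{j=1}^{n} H_{j}$, and the preservation of analyticity under the Neumann-type expansion of $(I - L(\lambda))^{-1}$, which is automatic since the series terminates.
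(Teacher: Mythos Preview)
Your argument is correct and follows the same strategy as the paper: factor \(I - R(\lambda) = (I - L(\lambda))(I - K(\lambda))\) with \(L\) strictly lower triangular (hence block-nilpotent, so \(I-L\) is always invertible) and \(K\) compact, then apply the classical analytic Fredholm theorem to the second factor. The paper reaches this factorization by performing block Gaussian elimination on \(I-R\) row by row, whereas you obtain it in one stroke by taking \(L\) to be the strictly lower-triangular part of \(R\) and setting \(K=(I-L)^{-1}U\); your derivation is a little more direct, but the underlying idea is identical.
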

 \begin{proof}

   We perform block Gaussian elimination modulo compact
   operators. We subtract a multiple of the first row from
   each subsequent row, so that the resulting operators in
   the first column are compact.
   Specifically, we set
   \begin{eqnarray*}
     \widetilde{R_{n1}} &=&
     R_{n1}-R_{n1}\left(I-R_{11}\right)
     \\
     &=& R_{n1}R_{11}
    \\\na{and, for \(m>1\),}
     \widetilde{R_{nm}} &=& R_{nm}-R_{n1}R_{1m}
   \end{eqnarray*}

   The new first column is compact below the diagonal, and
   the entries in the other columns have only been changed
   by the addition of a compact operator ( \(R_{1m}\) for
   \(m>1\) are compact), so the new operator still has the
   form, identity plus UTC. After repeating for each
   subsequent column, we reach a point where all off
   diagonal blocks are compact. We have produced a
   factorization 
   \begin{eqnarray*}
     I - R(z) = (I - L(z))(I - C(z))
   \end{eqnarray*}
   with \(L(z)\) strictly lower triangular, and \(C(z)\)
   compact. The first factor is always invertible and we
   may invoke the analytic Fredholm theorem \ref{th:aft},
   and the remark which follows it, on the second factor.
\end{proof}

\begin{theorem}[UTC Resolvent Theorem]\label{sec:utc-resolvent}
  Let \(B\) be a closed densely defined operator on an
  infinite dimensional Hilbert space and suppose that for
  one complex number \(\lambda_{0}\), \((B-\lambda_{0}
  I)\) is invertible and \((B-\lambda_{0} I)^{-1}\) is
  UTC. Then the spectrum of \(B\) consists of a (possibly empty)
  discrete set of eigenvalues. with finite dimensional  generalized
  eigenspaces.
\end{theorem}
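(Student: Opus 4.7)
The plan is to reduce the UTC Resolvent Theorem to the Upper Triangular Analytic Fredholm Theorem via the resolvent identity. Starting from (\ref{eq:27}) at the privileged point $\lambda_0$, I would write
\[
R(\mu) = R(\lambda_0)\bigl(I - (\mu - \lambda_0) R(\lambda_0)\bigr)^{-1},
\]
which holds whenever the inverse on the right exists. The operator valued function $K(\mu) := (\mu - \lambda_0) R(\lambda_0)$ is entire in $\mu$ and UTC-valued, since the UTC operators form a vector space, and $I - K(\lambda_0) = I$ is trivially invertible.

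Applying the Upper Triangular Analytic Fredholm Theorem to $K(\mu)$ on the connected set $L = \mathbb{C}$ produces a discrete set $\Sigma \subset \mathbb{C}$ outside of which $I - K(\mu)$ is invertible. For each such $\mu$ the displayed formula furnishes a bounded two-sided inverse of $B - \mu I$, so $\mathrm{spec}(B) \subseteq \Sigma$ is discrete.

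To address the eigenvalue and generalized eigenspace claims, I would unpack the factorization $I - K(\mu) = (I - L(\mu))(I - C(\mu))$ constructed inside the proof of that theorem, with $L(\mu)$ strictly lower triangular and $C(\mu)$ compact, both analytic in $\mu$. Strict lower triangularity on a finite block decomposition of $H$ forces $L(\mu)$ to be nilpotent, so $(I - L(\mu))^{-1} = I + L(\mu) + \cdots + L(\mu)^{n-1}$ is analytic and everywhere invertible. Non-invertibility of $I - K(\mu_0)$ is therefore equivalent to non-invertibility of $I - C(\mu_0)$, and the classical meromorphic Fredholm theorem for compact operators applies to $I - C(\mu)$: near each $\mu_0 \in \Sigma$, $(I - C(\mu))^{-1}$ extends to a meromorphic operator valued function whose principal part is of finite rank. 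Multiplying by the analytically invertible $(I - L(\mu))^{-1}$ and by the bounded operator $R(\lambda_0)$ preserves both meromorphy and finite rank of the principal part, so $R(\mu)$ is meromorphic at $\mu_0$ with finite rank principal part. The spectral projection $P_{\mu_0}$ is the residue of $R$ at $\mu_0$, hence has finite rank, and its range is by definition the generalized eigenspace at $\mu_0$. In particular $\ker(B - \mu_0 I) \ne 0$, so each $\mu_0 \in \Sigma \cap \mathrm{spec}(B)$ is genuinely an eigenvalue.

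I expect the main obstacle to be this last transfer: carrying the meromorphic, finite rank principal part conclusion from the compact factor $I - C(\mu)$ back to the full resolvent $R(\mu)$. The two points that need care are the analyticity and everywhere-invertibility of $(I - L(\mu))^{-1}$ (which rests on nilpotency of strictly lower triangular operators on the finite block decomposition) and the observation that composition with a bounded, analytically invertible operator does not enlarge the order of a pole or the rank of a principal part coefficient.
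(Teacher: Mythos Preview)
Your proposal is correct and follows the same route as the paper: write $(B-\tau I) = (B-\lambda_0 I)\bigl(I - (\tau-\lambda_0)(B-\lambda_0 I)^{-1}\bigr)$ and apply the Upper Triangular Analytic Fredholm Theorem to the second factor. Your argument is in fact more thorough than the paper's on the generalized eigenspace claim---the paper simply asserts that the kernel of the second factor is finite-dimensional at the exceptional points, whereas you carry the meromorphic Fredholm structure through the factorization $(I-L)(I-C)$ to obtain finite-rank residues of $R(\mu)$ and hence finite-rank spectral projections.
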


\begin{proof}
  According to the resolvent identity \eqref{eq:27},
for any  complex number \(\tau\), we may write
   \begin{eqnarray*}
     \left(B-\tau I\right) =
     \left(B-\lambda_{0} I\right)\left(I - \left(B-\lambda_{0} I\right)^{-1}(\tau-\lambda_{0})\right)
   \end{eqnarray*}
Because \(\left(B-\lambda_{0} I\right)^{-1}\) is UTC, the
UT analytic Fredholm theorem implies that
factor on the right is invertible at all 
but a discrete set of  points \(\tau_{n}\), and the dimension of
the kernel is finite at all such points. 
\end{proof}

\section{Application of  the UT Analytic Fredholm Theorem}

 \begin{proof}[Proof of Theorem \ref{sec:th2}]
   Let \(B\) be the operator defined in
   \eqref{eq:1}. Proposition \ref{sec:prop6} tells us that,
   if we choose \(\lambda_{0}\) real, positive, and large
   enough, then \(\left(B-\lambda_{0} I\right)\) is
   invertible and \(\left(B-\lambda_{0} I\right)^{-1}\) is
   UTC, so
   theorem~\ref{sec:utc-resolvent} guarantees that the
   spectrum of \(B\) is discrete and of finite multiplicity.
\end{proof}

\begin{proof}[Proof of Theorem \ref{sec:th1}]

\begin{eqnarray}\label{eq:3}
    (B-\lambda_{0} I_{m}) &=& \left(B-\lambda_{0} I\right)
    \left(I - \lambda_{0}\left(B-\lambda_{0} I\right)^{-1}
      \begin{pmatrix}
        m&0\\0&0
      \end{pmatrix}\right)
  \end{eqnarray}

The factor on the right is of the form identity plus UTC,
and therefore UT~Fredholm of index zero. We will  show
that the kernel or cokernel of that factor is empty for a
large positive  \(\lambda_{0}\).  It will then follow that
\begin{eqnarray*}
  (B-\lambda_{0} I_{m})^{-1} =  \left(I - \lambda_{0}\left(B-\lambda_{0} I\right)^{-1}
      \begin{pmatrix}
        m&0\\0&0
      \end{pmatrix}\right)^{-1}
\left(B-\lambda_{0} I\right)^{-1}
\end{eqnarray*}
is UTC -- because the product of UTC and identity minus
UTC is UTC. Multiplication by \(I_{m}\) also preserves
UTC, so \(\left(I_{m}^{-1}B -\lambda_{0} I\right)^{-1}\) is
the upper triangular compact resolvent of \(I_{m}^{-1}B\) at
\(\lambda_{0}\). Theorem ~\ref{sec:utc-resolvent} now
implies theorem \ref{sec:th1}.
Therefore we may finish the proof of theorem~\ref{sec:th1}
with:

\begin{proposition}\label{sec:appl-ut-analyt}
  Suppose that \(\Re(1+m(x))\ge \delta>0\) in 
  \(D\). 
  \begin{enumerate}
  \item If \(m\) is real in \(D\) and \(m <-m_{*}<0\) in
    some neighborhood \(N\) of \(\bd{D}\), then, for
    \(\lambda\) real and sufficiently large,
    \(\ker(B-\lambda I_{m})\) is empty.
  \item If \(\frac{-\pi}{2}<\theta<\frac{\pi}{2}\) and
    \(\Re(e^{i\theta}m(x))>m_{*}>0\) in some neighborhood
    \(N\) of \(\bd{D}\), then, for \(\lambda\) real and
    sufficiently large, \(\coker(B-\lambda I_{m})\) is
    empty.
  \end{enumerate}
\end{proposition}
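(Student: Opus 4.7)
In both cases my plan is to exploit the coercivity of $m$ near $\bd{D}$ in the same spirit as in the corollary to Proposition~\ref{sec:interior-est}: derive one energy identity that equates $\int m|v|^{2}$ (or $\int m|w|^{2}$ in the dual case) with a manifestly non-negative quantity built from the \emph{other} component, then use the sign structure of $m$ near the boundary to compare the contributions on $N$ and on $D\setminus N$. The interior decay estimate \eqref{eq:17} runs the comparison in the opposite direction, and the two together force the bad component to vanish.

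\textbf{Case 1 (kernel).} Given $(u,v)\in\ker(B-\lambda I_{m})$ with $u\in H^{2}_{0}(D)$ and $\Delta v\in\ltd$, I would multiply the equation $(\Delta-\lambda)v=0$ by $\bar{u}$ and integrate by parts twice, observing that the boundary terms vanish because $u$ and $\ddnu{u}$ vanish on $\bd{D}$. Substituting $\Delta u=\lambda(1+m)u-mv$ from the first equation of the system produces the identity
\begin{eqnarray*}
   \int m|v|^{2}\;=\;\lambda\|\grad u\|^{2}+\lambda^{2}\int(1+m)|u|^{2}\;\ge\;0,
\end{eqnarray*}
where the non-negativity is real because $m$ is real and uses $1+m\ge\delta$. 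Splitting the left-hand side over $N$ (where $m\le-m_{*}$) and over $D\setminus N$ (where $|m|\le m^{*}$) forces $\int_{D\setminus N}|v|^{2}\ge(m_{*}/m^{*})\int_{N}|v|^{2}$. Applying \eqref{eq:17} to $v$ with $\rho=1$, $g=0$ and a cutoff $\phi$ equal to $1$ on $D\setminus N$ and supported in $N$ provides the reverse estimate $\int_{D\setminus N}|v|^{2}\le\frac{K}{\lambda-K}\int_{N}|v|^{2}$. For $\lambda$ large the two are incompatible unless $v\equiv 0$; the first equation then reduces to $(\Delta-\lambda(1+m))u=0$ on $H^{2}_{0}(D)$, and a one-line energy identity gives $u\equiv 0$.

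\textbf{Case 2 (cokernel).} I would first compute the adjoint explicitly. Paying attention to which component must absorb the $H^{2}_{0}$ boundary conditions, the integration-by-parts calculation should yield
\begin{eqnarray*}
   (B-\lambda I_{m})^{*}=\begin{pmatrix}\D-\lambda(1+\bar m)&0\\ \bar m&\Doo-\lambda\end{pmatrix},
\end{eqnarray*}
so $\coker(B-\lambda I_{m})$ is parametrized by pairs $(w,z)$ with $\Delta w\in\ltd$ and $z\in H^{2}_{0}(D)$ satisfying $(\Delta-\lambda(1+\bar m))w=0$ and $\bar m w+(\Delta-\lambda)z=0$. Testing the first equation against $\bar{z}$, integrating by parts (now the boundary terms vanish because of $z$), substituting $\Delta z=\lambda z-\bar m w$, and using that testing the second against $\bar{z}$ gives $\int\bar m w\bar{z}=\|\grad z\|^{2}+\lambda\|z\|^{2}$, I expect to arrive at the dual identity
\begin{eqnarray*}
   \int m|w|^{2}\;=\;-\lambda\bigl(\|\grad z\|^{2}+\lambda\|z\|^{2}\bigr)\;\le\;0.
\end{eqnarray*}
Multiplying by $e^{i\theta}$ and taking real parts rotates the right side to $-\lambda\cos\theta(\|\grad z\|^{2}+\lambda\|z\|^{2})\le 0$ since $\cos\theta>0$, and $\Re(e^{i\theta}m)>m_{*}$ on $N$ with $|m|\le m^{*}$ elsewhere again yields $m_{*}\int_{N}|w|^{2}\le m^{*}\int_{D\setminus N}|w|^{2}$. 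Estimate \eqref{eq:17} applied to $w$ with $\rho=1+\bar m$ (whose real part still exceeds $\delta$) reverses this, forcing $w\equiv 0$ for $\lambda$ large, after which the second equation immediately forces $z\equiv 0$.

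\textbf{Main obstacle.} The step I expect to require the most care is the adjoint calculation in Case 2: I have to verify that the domain of $(B-\lambda I_{m})^{*}$ places both boundary conditions on the $z$-component and leaves $w$ free, so that the two integrations by parts in the dual energy identity produce no boundary contributions, and that the contrast $m$ gets conjugated in the correct entry of the matrix. Once that bookkeeping is pinned down, both cases collapse to the same comparison between the boundary coercivity of $m$ and the interior decay \eqref{eq:17} that drove the corollary to Proposition~\ref{sec:interior-est}.
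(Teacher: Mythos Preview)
Your proof is correct and follows essentially the same route as the paper: you derive the same two energy identities (\(\int m|v|^{2}=\lambda\|\nabla u\|^{2}+\lambda^{2}\int(1+m)|u|^{2}\) for the kernel and \(\int m|w|^{2}=-\lambda(\|\nabla z\|^{2}+\lambda\|z\|^{2})\) for the cokernel), and then play the boundary coercivity of \(m\) against the interior decay estimate \eqref{eq:17}, exactly as the paper does via \eqref{eq:19}. Your adjoint computation is correct (indeed the displayed matrix in \eqref{eq:10} has \(\bar m\) in the wrong off-diagonal slot, though the paper's proof uses the right form); the only slip is the phrase ``supported in \(N\)'' for \(\phi\), where you clearly mean that \(1-\phi\) is supported in \(N\).
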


\begin{proof}
The kernel of \((B-\lambda I_{m})\) consists of
functions satisfying
\begin{eqnarray}
    \label{eq:14}
    (\Delta-\lambda) u + mv &=& \lambda m u
\\\label{eq:15}
    (\Delta-\lambda) v      &=& 0
  \end{eqnarray}
  with \(u\) and \(\ddnu{u}\) vanishing on \(\bd{D}\).
We multiply the conjugate of \eqref{eq:15} by \(u\) and 
integrate by parts to obtain
  \begin{eqnarray}\nonumber
    \int \vb(\Delta-\lambda)u &=& 0
\\\na{Subtracting this  from the integral of \(\vb\) times
  the \eqref{eq:14} yields\espace}\nonumber
\int m|v|^{2} &=& \lambda\int mu\vb
\\\na{Because the left hand side is real\espace}\label{eq:16}
\int m|v|^{2} &=& \lambda\int m\ub v
  \end{eqnarray}
We next multiply \eqref{eq:14} by \(\ub\) and integrate by
parts to find that
\begin{eqnarray}\nonumber
  -\int|\del u|^{2} -\lambda\int(1+m)|u|^{2} &=& -\int
  m\ub v
\\\na{which becomes, after inserting \eqref{eq:16}\espace}
\nonumber
 -\int|\del u|^{2} -\lambda\int(1+m)|u|^{2} 
&=& \frac{-1}{\lambda}\int m|v|^{2}
\\\label{eq:26}
&=& \frac{-1}{\lambda}\left(\int(1- \phi^{2})m|v|^{2} +
  \int\phi^{2} m|v|^{2}\right)
\end{eqnarray}
The estimate \eqref{eq:19} with \(g=0\) and \(\rho=1\),
implies that the real number
\begin{eqnarray}\label{eq:33}
  z &=& \frac{\int\phi^{2} m|v|^{2}}{\int(1- \phi^{2})m|v|^{2}}
\\\na{satisfies}\label{eq:35}
|z| &\le& \frac{K}{\lambda}
\\\na{so that for \(\lambda\) large enough}\nonumber
  1+z&>&0
\end{eqnarray}
If we rewrite \eqref{eq:26} as
\begin{eqnarray*}
  -\int|\del u|^{2} -\lambda\int(1+m)|u|^{2} &=&
  \frac{-1}{\lambda}\left(\int(1- \phi^{2})m|v|^{2}\right)
\left(1+z\right)
\end{eqnarray*}

we see that, for \(\lambda\) large enough, the
 right hand side is
positive, while the 
left hand side is negative, unless both \(u\) and \(v\)
are identically zero.\\

Finally, the kernel of \((B-\lambda I_{m})^{*}\)
consists of functions satisfying
\begin{eqnarray}
   \label{eq:28}
    (\Delta-\lambda) v + \mb u     &=& 0
 \\\label{eq:29}
    (\Delta-\lambda) u  &=& \lambda\mb u
  \end{eqnarray}
  with \(v\) and \(\ddnu{v}\) vanishing on
  \(\bd{D}\). 
We multiply \eqref{eq:28} first by \(\vb\) and integrate
  \begin{eqnarray}
-\int|\del v|^{2} -\lambda\int |v|^{2} 
&=& -\int \mb u\vb
\\\na{and then multiply the conjugate of \eqref{eq:28} by
  \(u\) to obtain}\nonumber
\int u(\Delta-\lambda)\vb &=& -\int m |u|^{2}
\\\na{Multiplying \eqref{eq:29} by \(\vb\) and
  integrating by parts gives}\nonumber
\int u(\Delta-\lambda)\vb &=& \lambda\int \mb u\vb
\\\na{Combining gives}\nonumber
  -\int|\del v|^{2} -\lambda\int |v|^{2} 
&=& \frac{1}{\lambda}\int m|u|^{2}
\\\na{we split the integral on the right into two parts}\nonumber
&=& \frac{1}{\lambda}\left(\int(1-\phi^{2})m|u|^{2}
+ \int\phi^{2} m|u|^{2}\right)
\\\na{We again employ \eqref{eq:19} with \(g=0\) and
  \(\rho=1+m \), to see that\espace}\label{eq:30}
-\int|\del v|^{2} -\lambda\int |v|^{2} 
&=&\frac{1}{\lambda}\left(\int(1-\phi^{2})m|u|^{2}\right)(1+z)
  \end{eqnarray}

  where \(z\) defined as in \eqref{eq:33}, is complex, but
  still satisfies (\ref{eq:35}). This time, the
  left hand side is a negative real number, and the
  hypothesis guarantees that, for every \(x\), \(m(x)\)
  sits in an open half plane (a cone) that does not
  contain the negative real semi-axis. This means that the
  argument of the integral on the right hand side of
  \eqref{eq:30} is bounded away from \(\pi\). The argument
  of \(1+z\) approaches zero as \(\lambda\) increases, so
  for large enough \(\lambda\), the left hand side belongs
  to the negative real axis, while the right hand side
  cannot, unless \(u\), and therefore also \(v\), is
  identically zero.
\end{proof}
This finishes the proof of theorem \ref{sec:th1}.
\end{proof}

\begin{proof}[Proof of Proposition \ref{sec:spectralP}]

  Those familiar with sprectral theory will recognize that
  the main step in the proof we give below verifies that,
  if \(p\) and \(m\) are two different contrasts, then
  \(B_{p}\) (resp. \(I_{p}^{-1}B_{p}\)) is arelatively
  bounded perturbation of \(B_{m}\)
  (resp. \(I_{m}^{-1}B_{m}\)).\\

  \newcommand{\rbm} {\ensuremath{\left(B_{m}-\lambda
        I\right)^{-1}}} \newcommand{\rbn}
  {\ensuremath{\left(B_{p}-\lambda I\right)^{-1}}}
  \newcommand{\rtm}
  {\ensuremath{\left(I_{m}^{-1}B_{m}-\lambda
        I\right)^{-1}}} \newcommand{\rtn}
  {\ensuremath{\left(I_{p}^{-1}B_{p}-\lambda
        I\right)^{-1}}} 

Because the curve \(\gamma\) avoids the spectrum of
\(B_{m}\), the resolvent \rbm is a
  holomorphic, hence continuous, function on the compact
  set \(\gamma\). Therefore
  \begin{eqnarray*}
    \Gamma(m) := \sup_{\lambda\in\gamma}|||\rbm||| <\infty
  \end{eqnarray*}
  where we have used \(|||A|||\) denote the norm of the operator
  as a mapping from \(\ltd\oplus\ltd\) into
  \(H^{2}_{0}(D)\oplus\ltd\) and will  use \(|A|\) to
  denote the norm of  a mapping from
  \(H^{2}_{0}(D)\oplus\ltd\) to \(\ltd\oplus\ltd\). Now,

  \begin{eqnarray*}
    |B_{p} - B_{m}| &=& \left| \begin{pmatrix}
                      0&(p-m)\\
                      0&0
                        \end{pmatrix}\right|
\le ||p-m||_{\infty}
\\\na{so that \espace}
|||\rbn||| &=& |||\rbm \left(I-(B_{p} -
  B_{m})\rbm\right)^{-1}|||
\\
&\le& \Gamma(m)\left(1 - ||p-m||_{\infty}\Gamma(m)\right)^{-1}
  \end{eqnarray*}
so we may conclude the existence of \rbn for
\(||p-m||_{\infty}< 1/\Gamma(m)\) and further that 

\begin{eqnarray*}
  |||\rbn-\rbm||| &=& |||\rbn\left(B_{p} -
    B_{m}\right)\rbm|||
  \\
  &\le& |||\rbn|||\;|B_{p} - B_{m}|\;|||\rbm|||
  \\
  &\le&\frac{\Gamma(m)}{(1 - ||p-m||_{\infty}\Gamma(m))}
  \;||p-m||_{\infty}\;\Gamma(m)
  \\\na{and hence, letting \(|\gamma|\) denote the length
    of \(\gamma\) and recalling the definition of the spectral projection from
    (\ref{eq:36})\espace} 
  |||P_{\gamma}(p) - P_{\gamma}(m)|||&\le&
\frac{|\gamma|\;||p-m||_{\infty}\;\Gamma^{2}(m)}{(1 - ||p-m||_{\infty}\Gamma(m))}
\end{eqnarray*}

which establishes the continuity of \(P_{\gamma}\) in the
Born approximation.\\

The proof for  \(I_{m}^{-1}B_{m}\) is analogous. We redefine

\begin{eqnarray*}
    \Gamma(m) := \sup_{\lambda\in\gamma}|||\rtm||| <\infty
  \end{eqnarray*}
and compute the norm of the  perturbation

\begin{eqnarray*}
    |I_{p}^{-1}B_{p} - I_{m}^{-1}B_{m}| &=& \left| \begin{pmatrix}
                     \frac{m-p}{(1+m)(1+p)}\Delta_{00} &\frac{p-m}{(1+m)(1+p)}\\
                      0&0
                        \end{pmatrix}\right|
\le \frac{||p-m||_{\infty}}{\delta^{2}}
\\\na{so that \espace}
|||\rtn||| &=& |||\rtm \left(I-(I_{p}^{-1}B_{p} -
  I_{m}^{-1}B_{m})\rtm\right)^{-1}|||
\\
&\le& \Gamma(m) \left(1 - \frac{||p-m||_{\infty}}{\delta^{2}}\Gamma(m)\right)^{-1}
  \end{eqnarray*}
and
\begin{eqnarray*}
  |||\rtn&-&\rtm|||\\
 &=& |||\rtn\left(I_{p}^{-1}B_{p} -
    I_{m}^{-1}B_{m}\right)\rtm|||
  \\
  &\le& |||\rtn|||\;|I_{p}^{-1}B_{p} - I_{m}^{-1}B_{m}|\;|||\rtm|||
  \\
  &\le&\frac{\Gamma(m)} {1 - \frac{||p-m||_{\infty}}{\delta^{2}}\Gamma(m)}
  \;\frac{||p-m||_{\infty}}{\delta^{2}}\;\Gamma(m)
\\
  &=&\frac{\Gamma^{2}(m)||p-m||_{\infty} }{\delta^{2} - ||p-m||_{\infty}}
\end{eqnarray*}
which establishes the continuity of the resolvent and
hence the spectral projection.

\end{proof}
\section{Discussion}
\label{sec:discussion}

We have shown that the interior transmission eigenvalue
problem, with some coercivity conditions on the contrast
\(m\), naturally leads to a simple class of closed operators
with upper triangular compact resolvents, which share the
properties of operators with compact
resolvent.\\

For the Born approximation to the interior transmission
eigenvalue problem (theorem~\ref{sec:th2}), the coercivity
condition on the values of \(m\) near the boundary seems
pretty natural. We expect that this cannot be weakened too
much. The conditions we require for theorem~\ref{sec:th1}
are more ad hoc. They are required for our proof, but we
see no strong reason to believe they are necessary.\\

If we could prove that these resolvents were not
quasi-nilpotent, existence of transmission eigenvalues,
and very likely completeness of generalized eigenspaces,
would follow.\\

\bibliographystyle{abbrv}
\bibliography{agest,sylvester,cit}

\end{document}